\newtheorem{theorem}{Theorem}
\newcommand{\tr}{\mathrm{tr}}
\newcommand{\Pa}{\mathrm{P_1}}
\newcommand{\Pb}{\mathrm{P_2}}
\DeclareMathOperator*{\argmin}{arg\,min}
\renewcommand{\Re}{\mathbb{R}}
\newcommand{\T}{\mathcal T}
\newcommand{\M}{\mathcal M}
\newcommand{\E}{\mathbb E}
\newcommand{\I}{\mathcal I}
\newcommand{\N}{\mathcal N}
\newcommand{\opu}{\operatorname{U}}
\newtheorem{theorem*}{Theorem}[section]
\newtheorem{proposition}{Proposition}[section]
\newtheorem{remark}{Remark}[section]
\title{Best Response Strategies for Asymmetric Sensing \\ in Linear-Quadratic Differential Games 
\thanks{Research of the authors was supported by the ARL grant ARL DCIST CRA W911NF-17-2-0181.
}
\thanks{
Shubham Aggarwal and Tamer Ba{\c s}ar are with the Coordinated Science Laboratory at the University of Illinois Urbana-Champaign, Urbana, IL, USA-61801. \\
Dipankar Maity is with Department of Electrical and Computer Engineering at the University of North Carolina at Charlotte, Charlotte, NC, USA-28223.\\
Emails:\texttt{\{sa57, basar1\}@illinois.edu, dmaity@charlotte.edu}
}

}
\author{Shubham~Aggarwal, Tamer~Ba{\c s}ar, and Dipankar~Maity 
}
\begin{document}

\date{}
\maketitle

\begin{abstract}
    In this paper, we revisit the two-player continuous-time infinite-horizon linear quadratic differential game problem, where one of the players can sample the state of the system only intermittently due to a sensing constraint while the other player can do so continuously. Under these asymmetric sensing limitations between the players, we analyze the optimal sensing and control strategies for the player at a disadvantage while the other player continues to play its security strategy. We derive an optimal sensor policy within the class of stationary randomized policies. Finally, using simulations, we show that the expected cost accrued by the first player approaches its security level as its sensing limitation is relaxed. 
\end{abstract}

\section{Introduction}

Emerging applications such as autonomous driving, multi-robot systems, smart-grid, smart and connected cities \cite{hang2020human,marden2018game,saad2012game,chi2021game} require game theoretic formulations involving multiple players with time-varying and heterogeneous sensing/communication constraints.
Understanding the effects of the latter on the players' objective functions is a pivotal research direction for efficient and optimal operations of such networked systems. 

Major research investigations have focussed on single-player networked control systems that constitute two decision-makers: the \textit{controller} and the \textit{sampler} \cite{zhang2019networked}. 
The controller decides on the control inputs based on the measurements received via the sampler, whose objective is to reduce the sensing/communication burden on the system. 
In this context, the \textit{event-triggered} and \textit{self-triggered} \cite{ramesh2013design,heemels2021event} paradigm is an important line of work which has also been extended to the case of \textit{cooperative} multi-agent systems \cite{dimarogonas2011distributed, yi2018dynamic}. 
In this paper, however, we are concerned with a \textit{non-cooperative multi-player game} formulation---a setting which has not received proportionate attention---under resource constraints.

Research on noncooperatve games with sensing/commu-nication constraints have mimicked the developments on \textit{event-} and \textit{self-triggered} frameworks and imposed similar pre-defined sensing/communication policies for the players; see e.g., \cite{yuan2017event}.
This line of work has not considered designing equilibrium sensing/communication strategies. 
From a theoretical side, considering both control and sensing/communication actions as part of the players' strategies is a challenging research direction as the information structure becomes decentralized and (\textit{sampler}'s) action dependent.
Even simpler games (such as two-player linear-quadratic (LQ) games) with well-known analytical (saddle-point) control strategies may become intractable under these additional sensing/communication constraints, and finding \textit{security strategies} thus becomes challenging.

It is worth noting that the single agent setup with a joint optimization on control and sensing/communication is itself an extremely challenging problem, see for example \cite{imer2005optimal,imer2005optimalb,imer2006optimal,molin2009lqg,imer2010optimal,lipsa2011remote,molin2014optimal}, where the authors solve different variations of the joint sensing/control problems under resource constraints.
The presence of another agent makes the problem significantly more challenging, often rendering a partially observed LQ game to be an infinite-dimensional optimization/game problem even with continuous observation \cite{bagchi1981linear}.

In this paper, we  revisit the classical two-player linear-quadratic zero-sum differential game (LQ-ZSDG) due to its wide applicability as well as  analytical tractability.  
We consider the set-up proposed in \cite{maity2023optimal} involving one of the players (minimizer) to be sensing limited.
This player cannot sense the state continuously and it has to intermittently turn on the sensor to obtain the state measurements. 
The sensor turns itself off immediately after making a measurement and sending it to the controller. 
A new sensing request is required to turn the sensor on again. 
Due to resource limitation, the player has a constraint on the average number of times it may turn on the sensor. 
The objective of this work is to analyze the differential game under this asymmetric sensing limitation and obtain: (i) the optimal controller and sensing strategies for the minimizing player, (ii) the effect of the sensing constraint on the minimizing player's cost function.

\textbf{Prior work on games with intermittent sensing:} One of the first papers that addresses zero-sum differential games with sampled state information for the players is \cite{bacsar1995minimax}, where however the sampling is fixed, but the dynamics switch modes intermittently.
The first work involving a joint controller and sampler for LQ-ZSDGs, on the other hand, appears to be \cite{maity2016strategies} where both players had access to only intermittent measurements. 
In that work, the minimizing (maximizing) player optimized an upper (lower) bound of the objective function in order to derive a guaranteed performance bound. 
The necessary condition on the equilibrium/optimal sensing strategy was discussed, and the sensing strategy was left open as an optimization problem. 
Later, in \cite{maity2016optimal} saddle-point control strategies were derived for the true objective function (as opposed to optimizing a bound on the objective function considered in \cite{maity2016strategies}) with the added constraint that, no matter which player requests the sensing, the state measurement will also be revealed/leaked to the other player. 
This resulted in a \textit{cooperative} sensing strategy and the sensing problem became a joint optimization (instead of a game) among the players. 
In a later extension \cite{maity2017linear}, each player was allowed to select its own sensing instances without revealing the state information to the other player. 
While the equilibrium control strategies were derived, derivation of the equilibrium sensing strategy turned out to be an intractable problem.
However, \cite{maity2017linear} discussed the existence of multiple equilibria and the corresponding necessary and sufficient conditions. 
Recently, this framework was adopted for asset defense applications \cite{huang2021defending}, which is a special case of the abovementioned setup where two players have two decoupled dynamics.  
Due to this decoupling, the sensing strategy does not involve a game, rather it becomes an optimization problem similar to what was proposed in \cite{maity2016strategies}. 
The authors proposed a bisection-search based algorithm to find the sensing instances; however, the algorithm lacks any optimality guarantee.
A similar setup with communication constraints was considered in \cite{maity2023efficient} which involved three agents: two players and a remote sensor. 
The first player was sensing limited and had to rely on the remote sensor to sense the opponent, whereas the second player could continuously sense the first player. 
The equilibrium control strategies and the optimal communication strategy between the first player and the remote sensor were derived.

Extension of sensing limited LQ games to a non-zero-sum setting was first studied for an infinite horizon case in \cite{maity2017asymptotic}.
Both average cost and discounted cost formulations were investigated.
Later, this was extended to discrete-time finite horizon non-zero-sum LQ games in \cite{maity2017linearcdc}.
Both the equilibrium control and sensing strategies were derived. 

\textbf{Contributions:} The contributions of this work are as follows: (i) We consider an infinite-horizon LQ-ZSDG setup between two players, one of which  (minimizer, called Player $\Pa$) is sensing limited. 
We derive the optimal control strategy as well as the optimal sensing strategy. 
(ii) We analytically quantify the degradation in Player $\Pa$'s performance as we vary the budget on the sensing constraint.
(iii) We prove that the optimal sensing follows a threshold-based policy on the \textit{Age-of-Information} at the controller. 
We discuss a tractable and analytical computational method for this threshold.
(iv) Although we consider an average cost formulation, our analysis converts the problem into a discounted cost formulation, and then recovers the solution to the average cost formulation by taking appropriate limits.
Hence, the analytical treatment of this work readily applies to a discounted cost formulation as well.

\textbf{Organization: } The rest of the paper is organized as follows. We formulate the two-player differential game problem in Section \ref{sec:Problem_form}. The optimal controller and sensor policies for $\mathrm{P}_1$ are computed in Sections \ref{sec:OptController} and \ref{sec:optSensor}, respectively.
We provide supporting numerical simulations in Section \ref{sec:numSims} and conclude the paper with its major highlights in Section \ref{sec:conc_disc}.

\textbf{Notations:} For a given time $t$, $t^-$ denotes the time right before $t$. We define the set $\mathbb N_0 := \{0,1,2, \cdots \}$. For symmetric matrices $X,Y$, the notation $X \succeq Y$ implies that $X-Y$ is positive semi-definite.  $\mathbb{I}[\cdot]$ denotes the indicator function.
$\N(\mu, \Sigma)$ denotes Gaussian distribution with mean $\mu$ and covariance $\Sigma$.

\section{Problem Formulation}\label{sec:Problem_form}

We consider an LQ-ZSDG between two players ($\Pa$ and $\Pb$) where one of the players (say $\Pa$) has a sensing limitation which constrains the sensing-rate. 
The scenario is motivated by pursuit-evasion games where players often rely on remote sensors (e.g., a radar) to detect and track their opponents; see \cite{maity2023efficient, maity2023optimal}. 
The remote sensor must be used sporadically as it is typically a shared resource that needs to serve other processes.

Let the state of the game evolve according to a stochastic linear differential equation
\begin{align} \label{eq:dyn}
\begin{split}
    dx(t) & = [Ax(t) +B_1u_1(t) + B_2u_2(t)] dt + G dW(t), \\
    x(0) & \sim \N(0,\Sigma_0),
\end{split}
\end{align}
where $x \in \Re^{n_x}$ is the state of the system, $u_i \in \Re^{n_u}_i$ is the control input of player $\mathrm{P}_i$, $ i=1,2$, and $\{W(t)\}_{t \geq 0}$ is a $p$--standard Brownian motion which is independent of the  initial state $x(0)$. 
We consider an infinite-horizon zero-sum game with
objective function  
\begin{align} \label{eq:payoff}
    J = \limsup_{T \rightarrow \infty} \frac{1}{T}\E \Big[ \int_0^T(\|x\|_Q^2 + \|u_1\|^2_{R_1} - \|u_2\|^2_{R_2}) dt\Big],
\end{align}
where $Q \succeq 0, R_i \succ 0$.
Player $\Pa$ is tasked to minimize $J$ whereas $\Pb$ maximizes it.
In order to avoid an ill-posed game (i.e., $\Pb$ can ensure $J\to \infty$ regardless of $\Pa$'s strategy), some parametric assumptions on $(A, B_1, B_2, Q, R_1, R_2)$ are needed.
A sufficient (and almost necessary) condition is that there exists a $P \succeq 0$ that satisfies the following generalized algebraic Riccati equation (GARE) 
\begin{align} \label{eq:ARE}
    A^{\!\top}\! P + PA + Q + P(B_2R_2^{-1}B_2^\top - B_1 R_1^{-1}B_1^\top)\! P = 0.
\end{align}
A detailed discussion on the necessary and sufficient conditions for the well-posedness of \eqref{eq:payoff} can be found in \cite{bacsar1998dynamic}.
Under the assumption that GARE \eqref{eq:ARE} has a positive-semidefinite solution, the LQ-ZSDG admits a unique saddle point in state-feedback form given by
\begin{align} \label{eq:Nash_u}
    u_i^*(t) = (-1)^i R_i^{-1}B_i^\top P x(t), 
\end{align}
where  $P$ is the minimal positive-semidefinite solution to \eqref{eq:ARE}.

This saddle-point strategy pair results in the expected cost: 
\begin{align}\label{eq:sec_level}
    J^* = \tr(PGG^\top).
\end{align}
The pair of saddle-point strategies \eqref{eq:Nash_u} provides  \textit{security strategies} \cite{bacsar1998dynamic} for both players with the unique security level being $J^*$ in \eqref{eq:sec_level}. 
That is, any player who deviates from \eqref{eq:Nash_u}, while the other uses \eqref{eq:Nash_u}, will receive a worse value than $J^*$. 

In this work, we consider $\Pa$ to be sensing-limited and not having continuous access to $x(t)$. 
An average sensing constraint is imposed on $\Pa$:
 \begin{align} \label{eq:sensingConstraint}
     \limsup_{T \rightarrow \infty} \frac{1}{T}\E [n(T)] \le b,
 \end{align}
 where $b$ is the sensing budget and $n(T)$ denotes the total number of sensing instances up to time $T$. 

Due to \eqref{eq:sensingConstraint}, $\Pa$ cannot implement \eqref{eq:Nash_u} and is \textit{forced} to deviate from the equilibrium strategy. 
However, this player can decide when it wants to sense $x(t)$, thus having some control over its sensing mechanism. We also assume that $\Pb$ does not know the budget $b$. The objective of this work thus is to determine the optimal sensing instances as well as the optimal control strategy to arrive at the minimum possible cost.

Player $\Pb$ can sense the state $x(t)$ continuously at every time instance $t$, whereas $\Pa$ can only access the state $x(t)$ \textit{intermittently} at discrete time instances while obeying the allowed constraints budget. 
In this regard, $\Pa$ designs a sensor-controller pair (denoted by $S$ and $C$, respectively), similar to a setup originally proposed in \cite{maity2016strategies}. 
We denote the combined policy of $\Pa$ by $\mu:= (\mu^{S}, \mu^{C})$, where $\mu^S$ and $\mu^C$ denote the sensing and the control policies, respectively.

Let the set of (possibly random) sensing instants for player $\Pa$ up to the current instant $t$ be denoted by $\T(t):= \{\tau_1, \cdots, \tau_{n(t)}\}$, where $\tau_k$'s are strictly increasing, $\tau_{{n}(t)} \leq t$ almost surely. 
The randomness in the sensing instances could be due to their dependence on the previously sensed $x$, which follows a stochastic process, or due to $\mu^S$ being a randomized strategy itself. 
$n(t)$ denotes the cardinality of $\T(t)$, i.e., the total number of sensing till $t$.
These sensing instances are not known to $\Pb$, and cannot be computed in advance since $\Pb$ does not know the budget $b$. 
$\Pb$ continues to implement \eqref{eq:Nash_u}, which is a \textit{security strategy} for $\Pb$, resulting in a payoff of at least $\tr(PGG^\top)$. We finally note here that once the strategy of $\Pb$ is fixed as above, the resulting game problem reduces to solving for the best response function of $\Pa$, which is what we consider in the rest of the work.

\begin{remark}
    While it might be tempting for $\Pb$ to deviate from \eqref{eq:Nash_u} to exploit $\Pa$'s sensing limitation, this may not be possible since $\Pb$ faces an incomplete information game. 
    Without knowing $b$, $\Pb$ cannot optimally respond to $\Pa$'s strategy. 
    In fact, by `wrongly' deviating from \eqref{eq:Nash_u}, $\Pb$ can be worse off than sticking with \eqref{eq:Nash_u}, i.e., it may receive a payoff which is less than $\tr(PGG^\top)$.
\end{remark}

\subsection{Information Set and Admissible Policies for $\Pa$}
We let $\I^S(t) := \{x(s), u_1(r), \T(t^-) \mid r \in [0,t), s \in \T(t^-)\} $ to be the information available to the decision-maker for sensing, whereas that for the control is 
 $\I^C(t) := \{x(s), u_1(r), \T(t) \mid r \in [0,t), s \in \T(t)\}$. 
Note that the slight difference in these two information sets lies in whether $\T(t) \supset \T(t^-)$, i.e., whether a sensing occurred exactly at $t$ or not. 
 For $t=0$, we define $\I^S(0) := \emptyset$ and $\I^C(0) := \T(0)$. 

We define the space of admissible controller policies for $\Pa$ as $\M^C:= \{\mu^C \mid \mu^C \text{ is adapted to } \I^C\}$ and that of the admissible sensor policies as $\M^S:= \{\mu^S \mid \mu^S \text{ is adapted to } \I^S\}$.

\section{Optimal Estimator-Controller for $\Pa$} \label{sec:OptController}

Given that $u_2$ follows \eqref{eq:Nash_u}, we may rewrite $J$ in \eqref{eq:payoff} as 
\begin{align} \label{eq:J_unsquared}
     J = \limsup_{T \rightarrow \infty} \frac{1}{T}\E \Big[ \int_0^T(\|x\|_{\tilde Q}^2 + \|u_1\|^2_{R_1} ) dt\Big],
\end{align}
where $\tilde Q = Q - P B_2 R_2^{-1} B_2^\top P$ is positive-semidefinite \cite{bacsar1998dynamic}.
Substituting $u_2 = R_2^{-1} B_2^\top P x$ in the dynamics \eqref{eq:dyn} yields
\begin{align} \label{eq:dyn2}
\begin{split}
    dx(t) & = [\tilde Ax(t) +B_1u_1(t)] dt + G dW(t), \\
    x(0) & \sim \N(0,\Sigma_0),
\end{split}
\end{align}
where $\tilde A = A + B_2 R_2^{-1} B_2^\top P$. 
For notational convenience, we define
    $M_i : = P B_i R_i^{-1} B_i^\top P,~~ i = 1,2$.
After the standard completion of squares, we may rewrite $J$ in \eqref{eq:J_unsquared} as 
\begin{align*}
    J \!= \!\tr(\tilde P GG^\top) + \limsup_{T \rightarrow \infty} \frac{1}{T}\E \Big[ \!\int_0^T \!\! \|u_1 \!+\! R_1^{-1}B_1^\top \tilde P x\|_{R_1}^2 dt \Big],
\end{align*}
where $\tilde P$ is the unique positive-semidefinite solution of the algebraic Riccati equation
\begin{align}
    \tilde A^\top \tilde P + \tilde P \tilde A + \tilde Q - \tilde P B_1 R_1^{-1}B_1 \tilde P = 0.
\end{align}
It is also known \cite{bacsar1998dynamic} that $\tilde P = P$, and consequently, 
\begin{align} \label{eq:J}
    J & \!= \!\tr( P GG^\top) \!+ \!\limsup_{T \rightarrow \infty} \frac{1}{T}\E \Big[ \!\int_0^T \!\! \|u_1 \! +\! R_1^{-1}B_1^\top  P x\|_{R_1}^2 dt \Big] \nonumber \\
    & = J^* + \limsup_{T \rightarrow \infty} \frac{1}{T}\E \Big[ \!\int_0^T \!\! \|u_1+R_1^{-1}B_1^\top  P x\|_{R_1}^2 dt \Big].
\end{align}
Thus, with sampled measurements, the optimal controller for $\Pa$ takes the form
\begin{align} \label{eq:optimalControl}
    u_1^*(t) : = \mu^C(\I^C(t)) = - R_1^{-1}B_1^\top P \hat{x}^*(t),
\end{align}
where  $\hat{x}^*(t) = \E[x(t) \mid \I^C(t)] $ is the least-squares estimate for $x(t)$ under the information available to the controller.

Let $\tau(t)$ denote the latest sensing instance at any given time $t$, i.e., $ \tau(t) = \tau_{n(t)}$. Then, from the dynamics \eqref{eq:dyn2}, we obtain 
\begin{align*}
    x(t) &= e^{\tilde A (t-\tau(t))} x(\tau(t)) + \int_{\tau(t)}^t  e^{\tilde A (t-s)} B_1 u_1(s) ds + \int_{\tau(t)}^t  e^{\tilde A (t-s)} G dW(s),
\end{align*}
which yields 
\begin{align*}
    \E[x(t)\! \mid \! \I^C(t)] = e^{\tilde A (t-\tau(t))} x(\tau(t)) + \!\!\int_{\tau(t)}^t\! \! e^{\tilde A (t-s)} B_1 u_1(s) ds,
\end{align*}
where we have used the properties of the Brownian motion to conclude $\E[  \int_{\tau(t)}^t  e^{\tilde A (t-s)} G dW(s) \mid \I^C(t)] = 0$. 
Therefore, we notice that the estimate $\hat{x}^*(t)$ follows the dynamics 
\begin{align} \label{eq:estimator}
\begin{split}
    &\dot{\hat{x}}^*(t)  = \tilde{A} \hat{x}^*(t) + B_1 u_1(t), ~~\quad  t \in [0,\infty) \setminus \T(T)\\
    &\hat{x}^*(t)  = x(t), \qquad\qquad\qquad~~~~ t \in \T(T).
    \end{split}
\end{align}

We note that the estimator \eqref{eq:estimator} itself does not depend on the controller strategy (i.e., the emergence of a \textit{separation principle}).
Under the optimal control strategy of \eqref{eq:optimalControl}, we may further simplify the estimator to obtain 
\begin{align} \label{eq:OptimalEstimator}
\begin{split}
    &\dot{\hat{x}}^*(t)  = (\tilde{A} - P^{-1}M_1) \hat{x}^*(t) , ~~\quad  t \in [0,\infty) \setminus \T(T)\\
    &\hat{x}^*(t)  = x(t), \qquad\qquad\qquad\qquad t \in \T(T).
    \end{split}
\end{align}
Let $e(t) = x(t) - \hat x^*(t)$ denote the estimation error.
Notice that $\hat x^*$ (or equivalently, $e(t)$) is completely characterized by $\T(T)$, which is determined by the sensing strategy $\mu^S$. 
After substituting the optimal controller for $\Pa$ in \eqref{eq:J}, we obtain 
\begin{align} \label{eq:OptJ}
    J=  J^* + \limsup_{T \rightarrow \infty} \frac{1}{T} \E [\int_0^T \|e\|_{M_1}^2  dt].
\end{align}
Next we turn towards constructing the optimal sensor policy.

\section{Optimal Sensor Policy}\label{sec:optSensor}
 The key ingredient facilitating the construction of an optimal sensing policy (as we will see later) will be the observation that we can work with an equivalent countable state Markov decision process (MDP) rather than an uncountable one. Consequently, we use the properties of the optimal value function to derive the threshold structure of the optimal policy, and then provide an algorithm to compute the threshold parameter explicitly.

\subsection{Sensor optimization problem}
The sensor's objective is to control the estimation error $e(t)$ satisfying the following differential equation:
\begin{align}\label{eq:error}
\begin{split}
    de(t) &= \tilde A e(t)dt + GdW(t), \qquad t \in [0,\infty) \setminus \T(T) \\
    e(t) &= 0, \hspace{3.3 cm} t \in \T(T).
\end{split}
\end{align}
This results in 
    $e(t) = \int_{\tau(t)}^t  e^{\tilde A (t-s)} G dW(s).$

Let us define the sensor action at time $t$ as: $\{0,1\} \ni \delta(t):= \mu^S(\I^S(t))$, where $\mu^S \in \M^S$ is an admissible sensor policy and $\delta(t) \!=\! 1$ denotes that a sensing occurred at time $t$.  

Next, we define the \emph{age of information} of the controller as $\Delta(t) = t-\tau(t)$, which denotes the time elapsed since the last sensing instant. 
Then, one may notice that $\Delta(t)$ follows the controlled MDP:

\begin{align}\label{eq:AoI}
    \dot{\Delta}(t) & = 1, \qquad\qquad t \in [0, \infty)\setminus \T(T), \nonumber \\
    \Delta(t) & = 0, \qquad\qquad t \in \T(T).
\end{align}
Consequently, by defining the (state transition) matrix $\Phi_{\tilde{A}}(t):= e^{\tilde A t}$, we may compute $\E[e(t)e^\top(t)]$ as 
\begin{align}\label{eq:error_cov}
    & \E[e(t)e^\top(t)] \nonumber \\
    &= \E\Big[\Big(\int_{\tau(t)}^t \!\! \!\!\Phi_{\tilde A}(t-s)G dW(s)\Big) \Big(\int_{\tau(t)}^t \!\! \!\! \Phi_{\tilde A}(t-s)G dW(s)\Big)^{\!\!\top}\Big] \nonumber \\
    & = \E\Big[\int_{0}^{\Delta(t)} \Phi_{\tilde A}(s)GG^\top \Phi_{\tilde A}^\top(s) ds\Big],
\end{align}
where the second equality follows from temporal independence of Brownian motion. 
Substituting \eqref{eq:error_cov} in \eqref{eq:OptJ} yields

\begin{align}\label{eq:cost_sensor}
    J & = J^* + \limsup_{T \rightarrow \infty} \frac{1}{T} \E\Big[\int_0^T \tr\Big(M_1 \int_{0}^{\Delta(t)} \!\!\Phi_{\tilde A}(s)GG^\top \Phi_{\tilde A}^\top(s) ds\Big) dt\Big].
\end{align}

Thus, we seek the best response strategy of the sensor to minimize $J$ subject to the dynamics \eqref{eq:AoI} and the sensing constraint \eqref{eq:sensingConstraint}.
That is, we wish to find the optimal sensing strategy by considering 
\begin{align}\label{eq:argmin_sensor}
    \argmin_{\mu^S \in \M^S} \bar J(\mu^S) \text{      subject to    \eqref{eq:sensingConstraint},}
\end{align}
where $\bar J (\mu^S) := \limsup_{T \rightarrow \infty} \frac{1}{T} \E\Big[\!\!\int_0^T \!\!\tr\Big(M_1 \!\!\int_{0}^{\Delta(t)}\!\!\! \Phi_{\tilde A}(s)GG^\top $ $ \Phi_{\tilde A}^\top(s) ds\Big) dt\Big]$.

So far we have considered the case that the sensing could be performed at any time $t$, which implies the AoI $\Delta(t)$ is a continuous variable. 
This results in a continuous state-space constrained MDP, which is analytically, and sometimes computationally, intractable. 
To facilitate a simpler analysis and a tractable solution to the problem defined in \eqref{eq:argmin_sensor}, we further assume that the sensor can only sense at predefined time instances $\{0,h,\ldots, kh,\ldots\}$. 
Later, we will consider how $h$ affects the solution to \eqref{eq:argmin_sensor}, as well as consider the limiting case of $h\to 0$. 

Let the discrete time index $k$ determine the continuous time $t=kh$, and let discrete AoI be defined as $\Delta_k : = \frac{\Delta(kh)}{h}$. 
Therefore,
\begin{align}\label{eq:discretized_AoI}
  \Delta_{k+1} =  \begin{cases}
        \Delta_k + 1, & \delta_{k+1} = 0, \\
        1, & \delta_{k+1} = 1.
    \end{cases}
\end{align}
where $\delta_k = \delta(kh)$.
Equation \eqref{eq:discretized_AoI} is the discrete time equivalent of \eqref{eq:AoI}.

For any $t=kh$, we may write 
\begin{align*}
    \int_{0}^{\Delta(t)}\!\!\! \!\!\Phi_{\tilde A}(s)GG^\top \Phi_{\tilde A}^\top(s) ds & =\!\! \sum_{i=0}^{\Delta_k-1}\!\! \int_{ih}^{(i+1)h}\!\!\! \!\!\Phi_{\tilde A}(s)GG^\top \Phi_{\tilde A}^\top(s) ds  \nonumber \\
    & = \!\!\sum_{i=0}^{\Delta_k-1} \Phi_{\tilde A}(ih) \tilde{G}^h  \Phi_{\tilde A}^\top(ih)\!,
\end{align*}
where we define the constant  $\tilde G^h := \int_{0}^{h} \Phi_{\tilde A}(s)GG^\top \Phi_{\tilde A}^\top(s) ds$.

Therefore, in a similar fashion, we may write 
\begin{align}\label{eq:discretized_J}
\begin{split}
    & \bar{J}^h(\mu^S) := \limsup_{m \rightarrow \infty} \frac{1}{m} \E\Big[\sum_{k = 0}^m \opu(\Delta_k, \delta_k) \Big] \\
    & \opu(\Delta_k,\delta_k) := \tr\Big(M_1 \sum_{i=0}^{\Delta_k-1} \Phi_{\tilde A}(ih) \tilde{G}^h  \Phi_{\tilde A}^\top(ih)\Big) ,
\end{split}    
\end{align}
where we have used the superscript $h$ in $\bar{J}^h$ to remind us that the cost depends on the choice of $h$. 
Notice that $\opu(\Delta_k,\delta_k)$ does not directly depend on $\delta_k$, and the dependence is through $\Delta_k$ which gets reset to $0$ via $\delta_k$. 

\subsection{Sensor best-response strategy}\label{subsec:Sensor_BR}

Now, we wish to solve the constrained MDP problem with the objective $\bar J^h(\mu^S)$ subject to the dynamics \eqref{eq:discretized_AoI} and the discrete time equivalent of the constraint \eqref{eq:sensingConstraint}.
To this end, we first note that $\Delta_k \in \mathbb{N}_0$, and $Q(\Delta_k+1, \Delta_k,\delta_k) = \delta_k$ and $Q(1, \Delta_k,\delta_k) = 1-\delta_k$, where $Q(s',s,a)$ denotes the probability of transitioning to the next state $s'$ given that an action $a$ was chosen in the current state $s$. Next, we define the Lagrangian function corresponding to \eqref{eq:discretized_J}  as:
\begin{align}\label{eq:unconstrained_J}
    \bar J^h_\lambda(\mu^S) & = \limsup_{m \rightarrow \infty} \frac{1}{m} \E\Big[\sum_{k = 0}^m \opu(\Delta_k, \delta_k) + \lambda n(m) \Big] \nonumber \\
    &  = \limsup_{m \rightarrow \infty} \frac{1}{m} \E\Big[\sum_{k = 0}^m \bar \opu(\Delta_k, \delta_k) \Big],
\end{align}
where $\bar \opu(\Delta_k, \delta_k) = \opu(\Delta_k, \delta_k) + \lambda \delta_k$ and $\lambda \ge 0$ is the Lagrange multiplier. Consequently, we obtain an unconstrained minimization problem defined by \eqref{eq:unconstrained_J} subject to \eqref{eq:discretized_AoI}, for which we obtain an optimal sensing policy in the sequel. The procedure \cite{aggarwal2023weighted} for the same involves first passing to a discounted cost problem corresponding to the average cost problem in \eqref{eq:unconstrained_J}, establishing the optimal policy structure for the former, and then taking sequential limit of the discount factor to 1 to recover the policy structure for the average cost problem \cite{aggarwal2023weighted}.

Let us start by considering a discounted cost formulation corresponding to \eqref{eq:unconstrained_J} with a discount factor of $\beta \in (0,1)$ as follows:
\vspace{-2mm}
\begin{align}
    \bar{J}^h_{\beta,\lambda}(\mu^S) & = \E\Big[\sum_{k=0}^\infty \beta^k \bar \opu(\Delta_k,\delta_k) \Big].
\end{align}
Further, let us also define the optimal value function associated with the above discounted cost by $V^h_{\beta,\lambda}(\Delta):= \inf_{\mu^S \in \M^S} \bar J^h_{\beta,\lambda}(\Delta,\mu^S)$. Then, it is easy to see that due to the positivity of the cost $\bar \opu(\cdot, \delta_k)$, $V^h_{\beta,\lambda}(\Delta)$ is non-decreasing in its argument.
Moreover, we can show using \cite{aggarwal2023weighted} that $V^h_{\beta,\lambda}(\Delta)$ satisfies the Bellman equation
    \begin{align}\label{eq:disc_bellman}
        V^h_{\beta,\lambda}(\Delta) = \min_{\delta \in \{0,1\}}\{\bar \opu(\Delta,\delta) + \beta \E[V^h_{\beta,\lambda}(\Delta')]\},
    \end{align}
    and that a stationary (deterministic) policy solving the RHS in \eqref{eq:disc_bellman} is optimal, which we refer to as $\beta$--optimal.
    
The following proposition now characterizes the structure of the $\beta$-optimal policy as a threshold-based policy.

\begin{proposition}\cite{aggarwal2023weighted}
        The $\beta$--optimal policy is of the form
        $\delta_{k} = \mathbb{I}[\Delta_k \geq \eta_{_{\beta, \lambda}}],$
    for some $\eta_{_{\beta, \lambda}} \ge 0$.
\end{proposition}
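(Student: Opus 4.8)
The plan is to establish the threshold structure of the $\beta$-optimal policy by exploiting monotonicity and convexity properties of the value function $V^h_{\beta,\lambda}$ in the state $\Delta$. First I would recall that the running cost $\opu(\Delta,\delta)$ is nondecreasing in $\Delta$ (as already observed), and moreover it is \emph{convex} in $\Delta$: indeed, $\opu(\Delta+1,\delta)-\opu(\Delta,\delta)=\tr(M_1\Phi_{\tilde A}(\Delta h)\tilde G^h\Phi_{\tilde A}^\top(\Delta h))\succeq 0$, and one checks that this increment is itself nondecreasing in $\Delta$ under the standing stability-type assumptions (or, for the discounted argument, one only needs monotonicity of $\opu$ plus the structural form of the transition). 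I would then argue by value iteration: define $V_0\equiv 0$ and $V_{n+1}(\Delta)=\min_{\delta\in\{0,1\}}\{\bar\opu(\Delta,\delta)+\beta\,\E[V_n(\Delta')]\}$, and show inductively that each $V_n$ is nondecreasing and convex (equivalently, has nondecreasing increments) in $\Delta$. Since $V_n\to V^h_{\beta,\lambda}$ by standard contraction arguments for discounted MDPs with these cost growth conditions, the limit inherits both properties.

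The core of the argument is then a ``one-step comparison.'' Fix $\Delta$ and compare the two actions in \eqref{eq:disc_bellman}. Choosing $\delta=1$ costs $\lambda+\beta V^h_{\beta,\lambda}(1)$, which is \emph{independent of $\Delta$}; choosing $\delta=0$ costs $\opu(\Delta,0)+\beta V^h_{\beta,\lambda}(\Delta+1)$, which is nondecreasing in $\Delta$. Define the advantage of waiting, $g(\Delta):=\big[\opu(\Delta,0)+\beta V^h_{\beta,\lambda}(\Delta+1)\big]-\big[\lambda+\beta V^h_{\beta,\lambda}(1)\big]$. I would show $g$ is nondecreasing in $\Delta$: the term $\opu(\Delta,0)$ is nondecreasing, and $\Delta\mapsto V^h_{\beta,\lambda}(\Delta+1)$ is nondecreasing since $V^h_{\beta,\lambda}$ is. (Convexity is what guarantees the difference genuinely crosses zero at most once and does not require a more delicate submodularity argument, but monotonicity of $g$ alone already forces a threshold.) Hence the set $\{\Delta:g(\Delta)\le 0\}$ is a down-set $\{0,1,\dots,\eta_{\beta,\lambda}-1\}$ and its complement is $\{\Delta\ge\eta_{\beta,\lambda}\}$, so the $\beta$-optimal action is $\delta_k=\mathbb I[\Delta_k\ge\eta_{\beta,\lambda}]$, with $\eta_{\beta,\lambda}:=\min\{\Delta:g(\Delta)>0\}$ (and $\eta_{\beta,\lambda}=\infty$, i.e.\ never sense, if this set is empty — ruled out when $\lambda$ is finite and $\tilde A$-growth makes $\opu(\Delta,0)\to\infty$). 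Ties (where $g(\Delta)=0$) can be broken toward $\delta=0$ without loss of optimality, which is why the threshold appears with a $\ge$.

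The main obstacle I anticipate is the first step: verifying that value iteration preserves monotonicity (and, if one wants it, convexity) of $V_n$ given that $\Delta$ lives on the unbounded set $\mathbb N_0$ and the costs grow with $\Delta$ (possibly exponentially if $\tilde A$ is not Hurwitz). One must (i) confirm the discounted problem is well posed — finiteness of $V^h_{\beta,\lambda}$ — which requires $\beta$ small enough relative to the growth rate of $\opu$, or an a priori bound showing the optimal policy senses often enough that the expected discounted cost stays finite; and (ii) justify the interchange of limit and minimization so that the properties pass to $V^h_{\beta,\lambda}$ and it satisfies \eqref{eq:disc_bellman}. Both are handled by the machinery of \cite{aggarwal2023weighted}, which is why the proposition is attributed there; the remaining content — the one-step comparison yielding the threshold — is the short argument sketched above. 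I would present the monotonicity-preservation induction carefully and cite \cite{aggarwal2023weighted} for the convergence and Bellman-equation facts, then conclude with the crossing argument for $g$.
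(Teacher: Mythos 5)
The paper does not actually prove this proposition: it is stated with a citation to \cite{aggarwal2023weighted}, and the only structural fact the paper itself records is the monotonicity of $V^h_{\beta,\lambda}$. Your proposal supplies the standard argument that the paper outsources, and it is essentially correct: monotonicity of $V^h_{\beta,\lambda}$ (preserved under value iteration since $\opu(\cdot,\delta)$ is a sum of positive semidefinite increments and hence nondecreasing), followed by a one-step comparison of the two action values in \eqref{eq:disc_bellman}, whose difference is monotone in $\Delta$, so the ``sense'' region is an up-set. You are also right that convexity is not needed; monotonicity of the advantage alone forces the threshold.

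One bookkeeping slip worth fixing: the cost of choosing $\delta=1$ is \emph{not} $\lambda+\beta V^h_{\beta,\lambda}(1)$ but $\opu(\Delta)+\lambda+\beta V^h_{\beta,\lambda}(1)$, since $\opu(\Delta_k,\delta_k)$ does not depend on $\delta_k$ directly and is incurred under both actions (the reset to $\Delta'=1$ happens at the next step). Consequently your $g(\Delta)$ is not the true advantage of waiting; the correct difference is $\beta\bigl(V^h_{\beta,\lambda}(\Delta+1)-V^h_{\beta,\lambda}(1)\bigr)-\lambda$, in which $\opu(\Delta)$ cancels. This does not damage the argument --- the correct difference is still nondecreasing in $\Delta$ by monotonicity of $V^h_{\beta,\lambda}$, so the threshold conclusion stands --- but the threshold you would compute from your $g$ is not the right one. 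Your flagged concerns about finiteness of $V^h_{\beta,\lambda}$ on the unbounded state space and the validity of the Bellman equation are legitimate and are precisely the technical content delegated to \cite{aggarwal2023weighted}, so deferring them is consistent with the paper's treatment.
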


Consider now a sequence $\beta_\ell$ of discount factors converging to $\beta$, and define $\bar \eta_\lambda$ to be the limit of the $\eta(\beta_{\ell_p})$, which are $\beta_{\ell_p}$--optimal thresholds with $\beta_{\ell_p}$ a converging subsequence of $\beta_\ell$. The limit $\bar \eta_\lambda$ is guaranteed to exist by the compactness of $\{0,1\}^\infty$.
We then have the main result as follows.

\begin{theorem}\label{thm:avg_optimality}
    There exist $ \bar V^h_\lambda := \lim_{\beta \nearrow 1}(1-\beta)V^h_{\beta,\lambda}(\Delta), \forall \Delta \in \mathbb{N}_0$ and $f^h_\lambda(\Delta):=V^h_\lambda(\Delta) - V^h_\lambda(1)$ satisfying
    \begin{align}\label{eq:avg_Bellman}
        \Bar V^h_\lambda \! + \!f^h_\lambda(\Delta) \!=\! \min_{\delta}\{\bar \opu(\Delta,\delta)\! +\! \E[f^h_\lambda(\Delta')]\}.
    \end{align}
\noindent Moreover, the limiting policy $\mu^{S,*}$ generating actions according to  $\delta_k ^*= \mathbb{I}[\Delta_k \geq \bar \eta_\lambda]$ as constructed above is average cost optimal (i.e., it achieves the minimum on the RHS of \eqref{eq:avg_Bellman}).
\end{theorem}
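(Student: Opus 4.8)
The plan is to establish the average-cost optimality via the standard vanishing-discount argument, specialized to this countable-state MDP. The goal is to show that the limits defining $\bar V^h_\lambda$ and $f^h_\lambda$ exist, that they satisfy the average-cost optimality equation (ACOE) \eqref{eq:avg_Bellman}, and that the limiting threshold policy attains the minimum. I would invoke the machinery already referenced from \cite{aggarwal2023weighted}: for each $\beta \in (0,1)$ the discounted Bellman equation \eqref{eq:disc_bellman} holds with a stationary $\beta$-optimal threshold policy (Proposition, $\delta_k = \mathbb{I}[\Delta_k \ge \eta_{\beta,\lambda}]$).

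\textbf{Step 1: Relative value function bounds.} Define the relative discounted value $h_{\beta,\lambda}(\Delta) := V^h_{\beta,\lambda}(\Delta) - V^h_{\beta,\lambda}(1)$. The key technical estimate I would prove is that $h_{\beta,\lambda}(\Delta)$ is bounded above and below, uniformly in $\beta$, by functions of $\Delta$ that do not depend on $\beta$. The lower bound follows from monotonicity of $V^h_{\beta,\lambda}$ in $\Delta$ (noted just before \eqref{eq:disc_bellman}), which gives $h_{\beta,\lambda}(\Delta) \ge 0$ for $\Delta \ge 1$. For the upper bound, I would use a coupling/policy-comparison argument: starting from AoI $\Delta$, follow the policy ``sense immediately, then mimic the $\beta$-optimal policy from state $1$.'' The extra cost incurred is the one-step cost $\bar\opu(\Delta,1) = \opu(\Delta,1) + \lambda$ relative to being at state $1$, which is finite for each fixed $\Delta$; this yields $h_{\beta,\lambda}(\Delta) \le \bar\opu(\Delta,1) - \bar\opu(1,0) + (\text{bounded term})$, uniformly in $\beta$. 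I also need $(1-\beta)V^h_{\beta,\lambda}(1)$ to stay bounded, which follows by exhibiting a fixed stationary policy (e.g. the periodic ``sense every $N$ steps'' policy for a suitable $N$) whose discounted cost from state $1$ is $O(1/(1-\beta))$ with a constant independent of $\beta$, hence $(1-\beta)V^h_{\beta,\lambda}(1)$ is bounded.

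\textbf{Step 2: Extract convergent subsequences.} With these uniform bounds in hand, take a sequence $\beta_\ell \nearrow 1$. Boundedness of $(1-\beta_\ell)V^h_{\beta_\ell,\lambda}(1)$ lets us pass to a subsequence along which it converges to some constant $\bar V^h_\lambda$; since for each fixed $\Delta$ we have $|(1-\beta)V^h_{\beta,\lambda}(\Delta) - (1-\beta)V^h_{\beta,\lambda}(1)| = (1-\beta)|h_{\beta,\lambda}(\Delta)| \to 0$, the limit $\lim_{\beta \nearrow 1}(1-\beta)V^h_{\beta,\lambda}(\Delta) = \bar V^h_\lambda$ exists and is independent of $\Delta$. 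Then, using the uniform bounds on $h_{\beta,\lambda}(\cdot)$ together with a diagonal argument over the countable state space $\mathbb{N}_0$, extract a further subsequence along which $h_{\beta_\ell,\lambda}(\Delta) \to f^h_\lambda(\Delta)$ pointwise for every $\Delta$.

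\textbf{Step 3: Pass to the limit in the Bellman equation.} Rewrite \eqref{eq:disc_bellman} in relative form: subtracting $V^h_{\beta,\lambda}(1)$ and using $\beta \E[V^h_{\beta,\lambda}(\Delta')] = \beta \E[h_{\beta,\lambda}(\Delta')] + \beta V^h_{\beta,\lambda}(1)$, one gets $(1-\beta)V^h_{\beta,\lambda}(1) + h_{\beta,\lambda}(\Delta) = \min_\delta\{\bar\opu(\Delta,\delta) + \beta \E[h_{\beta,\lambda}(\Delta')]\}$. The transition from $\Delta$ is deterministic given $\delta$ ($\Delta' = \Delta+1$ if $\delta=0$, $\Delta'=1$ if $\delta=1$), so the expectation is just a finite min over two evaluations of $h_{\beta,\lambda}$ at known arguments; passing $\beta_\ell \nearrow 1$ along the chosen subsequence gives \eqref{eq:avg_Bellman} directly, with no interchange-of-limit-and-infinite-sum subtleties. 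The main obstacle I anticipate is Step 1 — specifically, verifying the uniform-in-$\beta$ upper bound on the relative value function and the boundedness of $(1-\beta)V^h_{\beta,\lambda}(1)$; this is where the structure of the cost $\opu(\Delta,\delta)$ (polynomial-or-exponential growth in $\Delta$ depending on the spectrum of $\tilde A$) and the stabilizing role of the Lagrange term $\lambda\delta$ must be used carefully to ensure the comparison policies have finite, uniformly controlled cost.

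\textbf{Step 4: Optimality of the limiting threshold policy.} Finally, I would show the limiting policy $\mu^{S,*}$, $\delta_k^* = \mathbb{I}[\Delta_k \ge \bar\eta_\lambda]$, attains the minimum in \eqref{eq:avg_Bellman}. Since each $\eta_{\beta_{\ell_p},\lambda}$ achieves the min in the relative discounted equation, and the ``min over $\delta \in \{0,1\}$'' is over a finite set, the minimizing action at each $\Delta$ is stable under the limit: for $\Delta$ with $\Delta < \bar\eta_\lambda$ (resp. $\Delta \ge \bar\eta_\lambda$) the inequality defining optimality of $\delta=0$ (resp. $\delta=1$) passes to the limit, possibly after noting that at the threshold $\Delta = \bar\eta_\lambda$ both actions tie in \eqref{eq:avg_Bellman}. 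Hence $\delta_k^*$ selects an argmin of the RHS of \eqref{eq:avg_Bellman} at every state, which is precisely the assertion that $\mu^{S,*}$ is average-cost optimal; a standard verification lemma (the ACOE solution with a stationary minimizing policy of bounded relative value yields an optimal average cost equal to $\bar V^h_\lambda$) then closes the argument.
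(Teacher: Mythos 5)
Your proposal is correct and follows essentially the same route as the paper: your Step 1 bounds ($0 \le V^h_{\beta,\lambda}(\Delta) - V^h_{\beta,\lambda}(1) \le \bar \opu(\Delta,1)$, obtained from monotonicity and the ``sense immediately'' comparison, together with the summability of the one-step costs over the deterministic transitions) are precisely the verification of Assumptions 2 and $3^*$ of \cite{sennott1989average} that constitutes the paper's entire proof. The paper then simply cites the main theorem of \cite{sennott1989average} for your Steps 2--4 (subsequence extraction, passage to the limit in the Bellman equation, and optimality of the limiting threshold policy), so your additional work there is a self-contained re-derivation of that cited result rather than a genuinely different argument.
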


\begin{proof}
    To prove the above, we first observe that $V^h_{\beta,\lambda}(\Delta) - V^h_{\beta,\lambda}(1) \geq 0$ using monotonicity of $V^h_{\beta,\lambda}(\cdot)$. Further, by using \eqref{eq:disc_bellman}, we have that $V^h_{\beta,\lambda}(\Delta) \leq \bar \opu(\Delta,1)+ \beta V^h_{\beta,\lambda}(1) \leq \bar \opu(\Delta,1)+ V^h_{\beta,\lambda}(1)$. This implies that $V^h_{\beta,\lambda}(\Delta) - V^h_{\beta,\lambda}(1) \leq \bar \opu(\Delta,1)$. 
    Next, let us fix an $i \in \mathbb N_0$. Then, we have that $\sum_{j \in \mathbb{N}_0} Q(j,i,1) \bar \opu(j,1) = \bar \opu(1,1) < \infty$ and $\sum_{j \in \mathbb N_0}Q(j,i,0) \bar \opu(j,0) = \bar \opu(i+1,0) < \infty$. Hence Assumptions 2 and $3^*$ in \cite{sennott1989average} are satisfied. The result then follows using the main theorem of \cite{sennott1989average}.
\end{proof}
The above theorem establishes the threshold structure of the optimal policy $\mu^{S,*}$. Moreover, the quantity $\bar V^h_\lambda$ is called the average cost, and is independent of the initial condition, and \eqref{eq:avg_Bellman} gives the Bellman equation corresponding to $V^h_\lambda(\Delta)$.

\subsection{Threshold computation \& Policy construction}\label{subsec:Random_policy}
We now proceed to computing the threshold parameter $\bar\eta_\lambda$, which will then completely characterize the optimal policy solving the unconstrained minimization problem defined by \eqref{eq:unconstrained_J} subject to \eqref{eq:discretized_AoI}. 
In this regard, let us first define the finite state space $\mathcal{K} = \{1, \cdots, \bar \eta_\lambda\}$. For notational convenience, we also relabel $\opu(\Delta,\delta)$ as $\opu(\Delta)$.
Then, for $\Delta = \bar \eta_\lambda$, we have from Theorem \ref{thm:avg_optimality} that $\delta^* = 1$. Similarly, for $\Delta = \bar \eta_\lambda -1$, we have $\delta^* = 0$. Thus, using \eqref{eq:avg_Bellman}, we arrive at $V^h_\lambda(\bar \eta_\lambda) \leq \lambda + V^h_\lambda(1) \le V^h_\lambda(\bar \eta^\lambda +1)$, from which we have that there exists $\theta \in [0,1]$ such that 
\begin{align}\label{eq:test0}
    V^h_\lambda(\bar \eta_\lambda + \theta) = \lambda + V^h_\lambda(1).
\end{align}
Further, for $\Delta \geq \bar \eta_\lambda$, we have that $\delta^* =1$, and thus
\begin{align}\label{eq:test1}
    V^h_\lambda(\Delta) = \opu(\Delta) + \lambda + V^h_\lambda(1) - \bar V^h_\lambda,
\end{align}
which upon simplification and using \eqref{eq:test0}, yields
\begin{align}\label{eq:test2}
    \bar V^h_\lambda = \opu(\bar \eta_\lambda + \theta).
\end{align}
Further, for $\Delta \le \bar \eta_\lambda$, we have that
    $V^h_\lambda(\Delta) + \bar V^h_\lambda = \opu(\Delta) + V^h_\lambda(\Delta+1)$,
which, further using \eqref{eq:test1} and \eqref{eq:test2}, yields
\begin{align}\label{eq:threshold_comp}
    \opu(\bar \eta_\lambda + \theta) \bar \eta_\lambda = \sum_{\ell =1}^{\bar \eta_\lambda} \opu(\ell) + \lambda.
\end{align}
The above is an implicit equation which can be solved numerically to find $\bar \eta_\lambda$.
Thus, we have completely characterized the solution to the unconstrained MDP  defined in \eqref{eq:unconstrained_J}.

Now, we return to our original constrained optimization problem of minimizing \eqref{eq:discretized_J} subject to the sensing constraint \eqref{eq:sensingConstraint}. To this end, we start by observing that the sensing rate for $\mathrm{P}_1$ is given as $1/\bar \eta_\lambda$, and thus the constraint \eqref{eq:sensingConstraint} can be rewritten as 
\vspace{-1mm}
\begin{align}\label{sensing_rate}
    \frac{1}{\bar \eta_\lambda} \leq bh.
\end{align}
 It is also easy to see that an optimal policy (if it exists) satisfies the above constraint with equality. However, for such constrained optimization problems, an optimal policy may not lie in the class of deterministic policies \cite{altman2021constrained}. Thus, we construct a randomized policy ($\mu^S_r$) for the same, similarly to \cite{chen2021minimizing,aggarwal2023weighted}. We briefly highlight the construction here.

We first compute the optimal Lagrange multiplier ($\lambda^*$) in \eqref{eq:unconstrained_J}. To do so, we use the \textit{iterative bisection search method}. Precisely, we initialize $\lambda_1^{(0)}=0$ and $\lambda^{(0)}_2=1$, and use the bisection method to find $\lambda^*$ within the interval $[\lambda_1^{(0)},\lambda^{(0)}_2]$ by satisfying \eqref{sensing_rate}. If it is not satisfied, we set $\lambda_1^{(i+1)} = \lambda_2^{(i)}$ and $\lambda_2^{(i+1)} = \lambda_2^{(i)}+1$, where $i$ is the iteration index.
We stop, whenever \eqref{sensing_rate} is satisfied, say, in the interval $[\lambda_1^{(i^*)}, \lambda_2^{(i^*)}]$ and $\lambda_2^{(i^*)} < \lambda_1^{(i^*)} + \epsilon$, for an appropriately chosen tolerance $\epsilon>0$. Next, we compute the thresholds $\bar \eta_{\lambda_1^{(i^*)}}$ and $\bar \eta_{\lambda_2^{(i^*)}}$ by rounding up to the nearest integer values. Let $b_1$ and $b_2$ be the budgets utilized by the policies corresponding to the thresholds $\bar \eta_{\lambda_1^{(i^*)}}$ and $\bar \eta_{\lambda_2^{(i^*)}}$, which are computed by solving \eqref{eq:threshold_comp} using $\lambda_1^{(i^*)}$ and $\lambda_2^{(i^*)}$, respectively. 
The discussion on the optimal sensing strategy can then be summarized in the following proposition.

\begin{proposition}
    The optimal sensing policy is randomized in general, and has the form 
    \begin{align}
        \delta_k^* = \mathbb I [\Delta_k \ge \vartheta \bar \eta_{\lambda_1^{(i^*)}} + (1-\vartheta)\bar \eta_{\lambda_2^{(i^*)}} ]
    \end{align}
    where $\vartheta\sim \mathrm{Bernoulli}(\nicefrac{(b-b_2)}{(b_1 - b_2)})$.
\end{proposition}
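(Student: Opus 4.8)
The plan is to establish the randomized policy by combining two ingredients: (i) the standard Lagrangian theory for constrained Markov decision processes, which tells us that an optimal stationary policy for the constrained problem is a randomization between (at most) two deterministic policies that are optimal for the unconstrained Lagrangian problem at the optimal multiplier $\lambda^*$; and (ii) the explicit threshold structure of those unconstrained-optimal policies derived in Theorem \ref{thm:avg_optimality} together with the implicit threshold equation \eqref{eq:threshold_comp}. First I would invoke the constrained-MDP duality result (in the spirit of \cite{altman2021constrained,chen2021minimizing,aggarwal2023weighted}): since the cost $\opu$ and the per-step sensing cost $\delta_k$ are nonnegative and the chain induced by any threshold policy is positive recurrent on the finite set $\K$, strong duality holds and there is a $\lambda^*\ge 0$ whose Lagrangian-optimal policies include one satisfying the sensing constraint with equality. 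Then I would argue that, because the threshold $\bar\eta_\lambda$ (equivalently the sensing rate $1/\bar\eta_\lambda$) is monotone in $\lambda$ and jumps by integer amounts, at $\lambda^*$ the two neighboring integer thresholds $\bar\eta_{\lambda_1^{(i^*)}}$ and $\bar\eta_{\lambda_2^{(i^*)}}$ straddle the required rate $bh$, i.e. the budgets $b_1\ge b\ge b_2$ they use bracket $b$.

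Next I would carry out the randomization-probability computation. A Bernoulli mixture $\vartheta$ between the two threshold policies induces a state process whose long-run sensing rate is the convex combination $\vartheta\, b_1 + (1-\vartheta)\, b_2$ in expectation over the mixing (here I would be slightly careful: the correct mixing is at the level of policies, so one either randomizes the threshold once at the start, or — more robustly — mixes the stationary occupation measures; either way the resulting average sensing rate is affine in the mixing weight). Setting this equal to the budget $b$ and solving for the weight gives exactly $\vartheta\sim\mathrm{Bernoulli}\big((b-b_2)/(b_1-b_2)\big)$, which is the claimed form. Optimality of this mixture among all admissible sensing policies then follows because its average cost equals the value of the Lagrangian relaxation at $\lambda^*$ (each component policy is $\lambda^*$-optimal, so the mixture is too, and it is feasible), and the Lagrangian value lower-bounds the constrained optimum by weak duality; hence the bound is met and the policy is optimal.

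I would also need to record the caveat, already flagged in the text via \cite{altman2021constrained}, that a purely deterministic optimal policy need not exist when $bh$ does not equal $1/\eta$ for an integer $\eta$, which is precisely why the randomization is "in general" necessary; when $b_1=b_2$ the Bernoulli degenerates and we recover a deterministic threshold policy. The formula $\delta_k^*=\mathbb I[\Delta_k\ge \vartheta\bar\eta_{\lambda_1^{(i^*)}}+(1-\vartheta)\bar\eta_{\lambda_2^{(i^*)}}]$ should be read with $\vartheta$ drawn once and held fixed, so that along any sample path the policy is a genuine threshold rule with one of the two integer thresholds.

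The main obstacle I anticipate is justifying the strong-duality / "at most two deterministic policies suffice" step rigorously in this \emph{average-cost, countably-infinite-state} setting rather than the usual finite-state constrained-MDP framework: one must verify the unichain/positive-recurrence and uniform-integrability conditions that make the occupation-measure LP (and its dual) well-behaved, or else reduce to the finite state space $\K=\{1,\dots,\bar\eta_\lambda\}$ as the proof of Theorem \ref{thm:avg_optimality} effectively does. Once the problem is legitimately posed on that finite chain, the rest — monotonicity of the threshold in $\lambda$, bracketing $b$, and the affine dependence of the sensing rate on the mixing weight — is routine, so I would spend most of the effort making the reduction to $\K$ airtight and citing \cite{altman2021constrained} for the structural claim.
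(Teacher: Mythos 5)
Your proposal is correct and follows essentially the same route as the paper, which presents this proposition without a formal proof as a summary of the preceding bisection-search construction (relying on \cite{altman2021constrained,chen2021minimizing,aggarwal2023weighted} for the Lagrangian-duality and two-policy-randomization structure). Your additional care about where the randomization is applied and about verifying the recurrence conditions on the reduced finite chain $\K$ fills in details the paper leaves implicit, but does not change the argument.
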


\section{Numerical simulations}\label{sec:numSims}
In this section, we validate through simulations the theoretical findings of the previous sections. For all simulations, we consider a scalar system (i.e., $n_x=1$ in \eqref{eq:dyn}), for which the system parameters are given as: $A = 0.5,~Q=4,~B_1=1,~B_2=0.5,~R_1=1,~R_2=0.5$. We first study the effect of the discretization carried out in Section \ref{subsec:Sensor_BR}. 
We plot the variation of the closed-loop cost $\bar J^h(\mu^S_r)$ versus the discretization parameter $h$ for a given budget of $0.4$ in Figure \ref{Fig:Cost_vs_h}. 
From that figure, we observe that a lower $h$ (mostly) leads to a lower cost  $\bar J^h(\mu^S_r)$. 
This is because we get a better approximation of the continuous time system with a lower $h$. Thus, by choosing $h$ judiciously, one can trade off computation resources required for sensing and the performance of the system.
\begin{figure}[h]
	\centering
	\includegraphics[width=0.59 \columnwidth ]{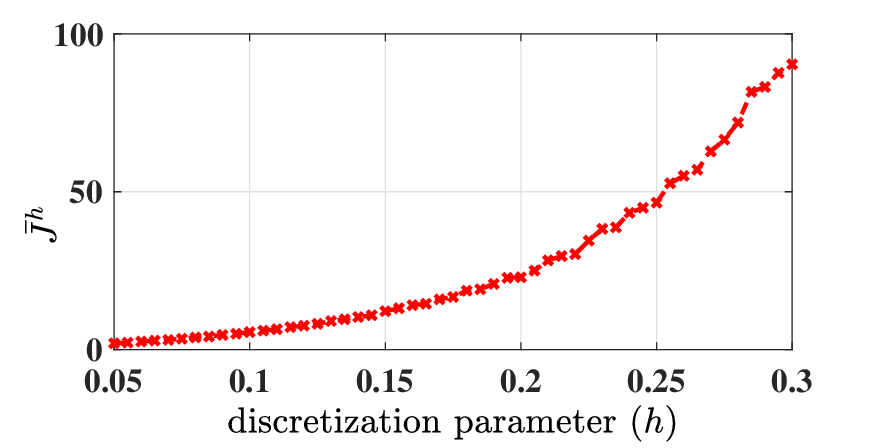}
 \vspace{-0.15cm}
	\caption{\small{Variation of the closed-loop cost $\bar J^h(\mu^S_r)$ vs $h$.
	}}
	\label{Fig:Cost_vs_h}
\end{figure}

Next, we plot (in Figure \ref{Fig:Cost_vs_C}) the variation of the closed-loop cost $\bar J^h(\mu^S)$ under the randomized policy $\mu^S_r$ versus the sensing budget $b$, for the discretization parameter $h=0.1$.
From this plot we observe that as more and more sensing budget $b$ is allowed to $\mathrm{P}_1$, its performance improves, as one would expect.
\begin{figure}[h]
	\centering
	\includegraphics[width= 0.59\columnwidth]{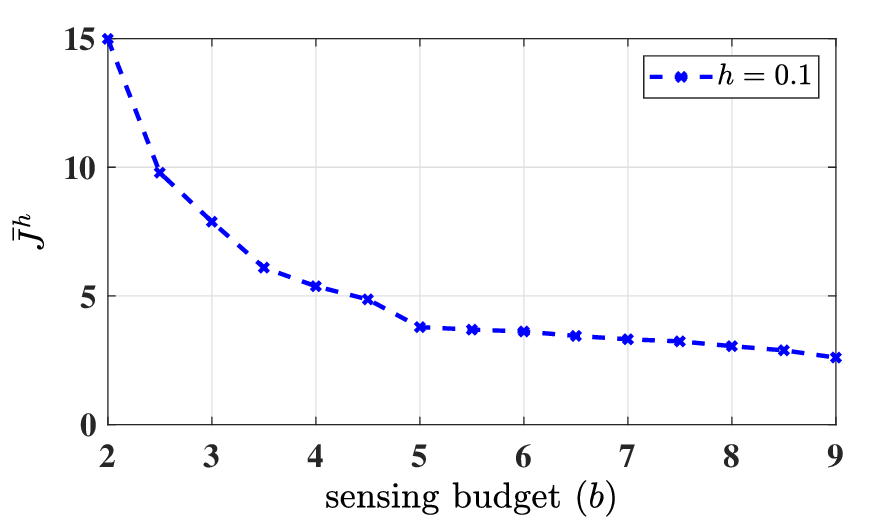}
 \vspace{-0.15cm}
	\caption{\small{Variation of the closed-loop cost $\bar J^h(\mu^S_r)$ vs $b$.
	}}
	\label{Fig:Cost_vs_C}
\end{figure}

Finally, we simulate a system of two players as formulated in the paper. The sensing budget was taken to be 40\% of the total horizon of 500 seconds and the discretization parameter was taken to be $h = 0.001$. The values of $\bar \eta_{\lambda_1^{(i^*)}}$ and $\bar \eta_{\lambda_2^{(i^*)}}$ were obtained to be 2 and 3 respectively. The corresponding evolution of the closed-loop state $x(t)$, estimation error $e(t)$ for player 1 and the control inputs of both the players versus time are plotted in Fig. \ref{Fig:evolution_vs_time}. 

\begin{figure*}[h]
	\centering
	\includegraphics[width=\textwidth]{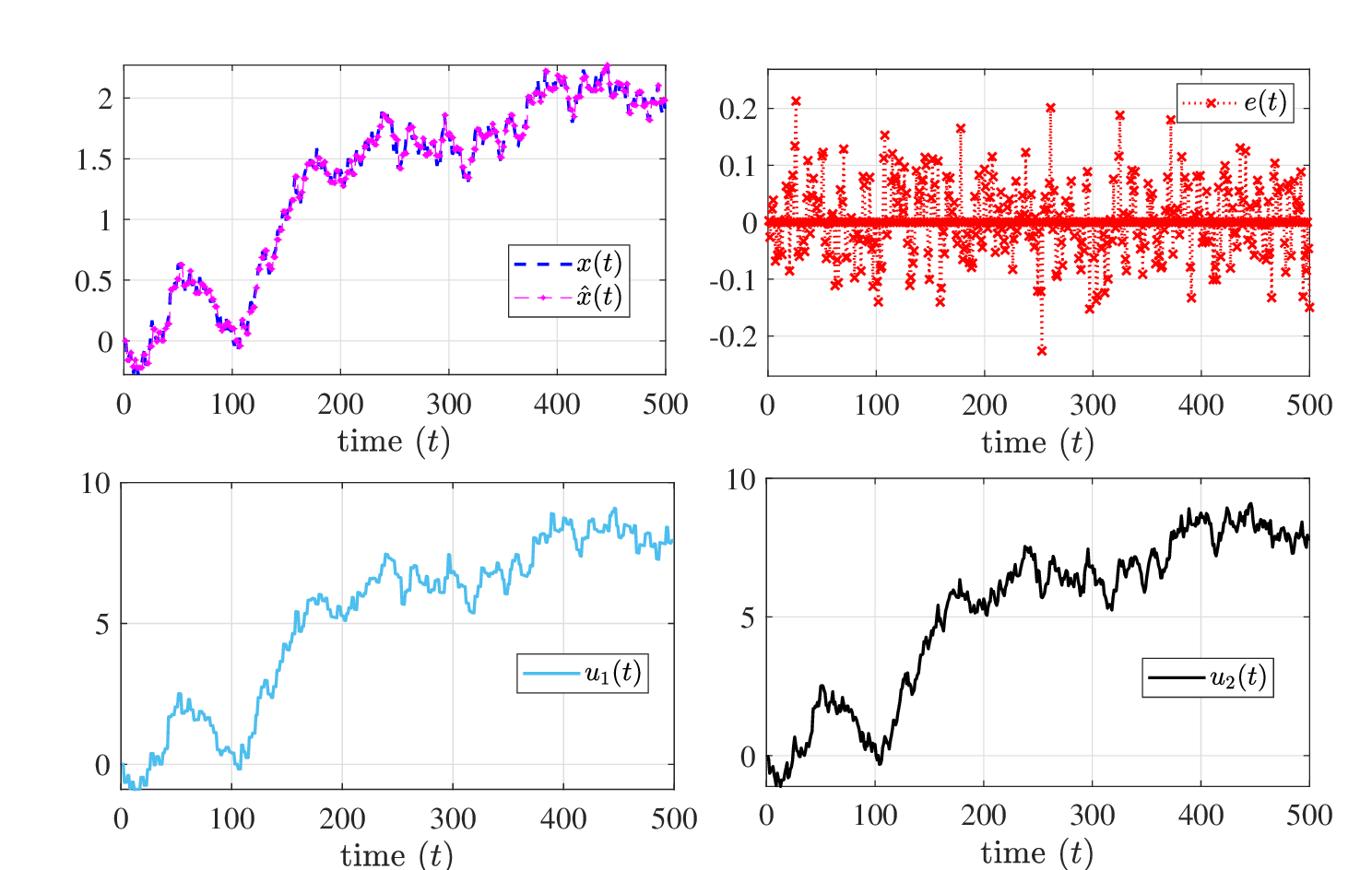}
 \vspace{-0.15cm}
	\caption{\small{Time evolution of the closed-loop state $x(t)$, estimation error $e(t)$ of player 1 and the control inputs $u_1(t),u_2(t)$ for both the players.}}
	\label{Fig:evolution_vs_time} 
\end{figure*}

\section{Conclusions}\label{sec:conc_disc}
In this paper, we considered a two-player LQ-ZSDG, where one of the players (maximizer) can continuously sense the state of the system to compute its control inputs, whereas the other player (minimizer) has a sensing budget, which forces it to sense only intermittently and maintain a state estimate between these instants.
We addressed the problem of joint control-sensor design for the minimizing player while the maximizing player is restricted to its saddle-point policy of the perfect information game.
We first showed that the control and sensor designs can be decoupled (in a particular order). Then, by converting the resulting optimization problem into an (approximate) countable state MDP, we were able to explicitly compute the sensing instants of the optimal (stationary) randomized sensing policy.
Finally, using numerical simulations, we observed that the performance of the minimizing player improves with decreasing step-size and increasing sensing budget, as expected.

We also note that it is still an open question to find (saddle-point) equilibrium strategies for the players, for not only the case of asymmetric information as considered here, but also for the case where there is a sensing budget for both players.

\bibliography{references,maity}
\bibliographystyle{IEEEtran}

\end{document}